\documentclass{amsart}

\usepackage[latin1]{inputenc}
\usepackage{amsmath, amsthm, amssymb}
\usepackage[colorlinks]{hyperref}

\newtheorem{defin}{Definition}[section]

\newtheorem{theorem}[defin]{Theorem}

\newtheorem{lemma}[defin]{Lemma}


\newcommand{\R}{\mathbb{R}}





\DeclareMathOperator{\cone}{cone}

\DeclareMathOperator{\interior}{int}
\DeclareMathOperator{\cl}{cl}


\DeclareMathOperator{\Trace}{Trace}

\title{Rational factorizations of completely positive matrices}

\author{Mathieu Dutour Sikiri\'c}
\address{M.~Dutour Sikiri\'c, Rudjer Boskovi\'c Institute, Bijenicka
  54, 10000 Zagreb, Croatia}
\email{mathieu.dutour@gmail.com}

\author{Achill Sch\"urmann}
\address{A.~Sch\"urmann, Universit\"at Rostock, Institute of
  Mathematics, 18051 Rostock, Germany}
\email{achill.schuermann@uni-rostock.de}

\author{Frank Vallentin}
\address{F.~Vallentin, Mathematisches Institut, Universit\"at zu
  K\"oln, Weyertal~86--90, 50931 K\"oln, Germany}
\email{frank.vallentin@uni-koeln.de}

\date{February 4, 2017}

\subjclass{90C25}

\keywords{copositive programming, completely positive matrix, cp-factorization}

\begin{document}

\begin{abstract}
  In this note it is proved that every rational matrix which lies in
  the interior of the cone of completely positive matrices also has a
  rational cp-factorization.
\end{abstract}

\maketitle

\markboth{M.~Dutour Sikiri\'c, A.~Sch\"urmann, F.~Vallentin}{Rational
  factorizations of completely positive matrices}

\section{Introduction}

The cone of completely positive matrices is central to copositive
programming, see \cite{Duer2010a} and also to several topics in matrix
theory, see \cite{Berman2003a}.  However, so far, this cone is quite
mysterious, many basic questions about it are open. In
\cite{Berman2015a} Berman, D\"ur, and Shaked-Monderer ask:
\textit{Given a matrix $A \in \mathcal{CP}_n$ all of whose entries are
  integral, does $A$ always have a rational cp-factorization?}

The \emph{cone of completely positive matrices} is defined as the
convex cone spanned by symmetric rank-$1$-matrices $xx^{\sf T}$ where
$x$ lies in the nonnegative orthant $\mathbb{R}^n_{\geq 0}$:
\[
\mathcal{CP}_n = \cone\{xx^{\sf T} : x \in \mathbb{R}^n_{\geq 0}\}.
\]
A \emph{cp-factorization} of a matrix $A$ is a factorization of the form
\[
A = \sum_{i=1}^m \alpha_i x_i x_i^{\sf T} \quad \text{with } \alpha_i
\geq 0 \text{ and } x_i \in \mathbb{R}^n_{\geq 0}, \quad \text{for } i = 1,
\ldots, m.
\]
We talk about a \emph{rational cp-factorization} when the $\alpha_i$'s
are rational numbers and when the $x_i$'s are rational vectors. Of
course, in a rational cp-factorization we can assume that the $x_i$'s
are integral vectors.

In this note we prove the following theorem:

\begin{theorem}
\label{thm:main}
Every rational matrix which lies in the interior of the cone of
completely positive matrices has a rational cp-factorization.
\end{theorem}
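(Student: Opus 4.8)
\emph{Proof plan.} The idea is to reduce the statement to finding a rational point in a rational polyhedral cone, where rationality of a solution comes for free. Suppose we can exhibit finitely many rational nonnegative vectors $v_1,\dots,v_k \in \mathbb{Q}^n_{\geq 0}$ with
\[
A \in \cone\{v_1 v_1^{\sf T},\dots, v_k v_k^{\sf T}\}.
\]
Then the linear system $A = \sum_{i=1}^k \lambda_i v_i v_i^{\sf T}$, $\lambda_i \geq 0$, has rational data and a nonempty solution set, which is therefore a nonempty rational polyhedron and hence contains a rational point $\lambda$; this $\lambda$ yields the desired rational cp-factorization. So the whole task is to produce such vectors $v_i$ for a given rational $A$ in the interior of $\mathcal{CP}_n$.

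To do this I would split $A$ into a part that can be rationalized crudely by rounding and a fixed ``reservoir'' part that lies in the interior of a finitely generated rational subcone of $\mathcal{CP}_n$, so that the rounding error can be absorbed into the reservoir. The point is the following claim: there is a rational matrix $M \in \mathcal{CP}_n$ and a finite set $V \subseteq \mathbb{Q}^n_{\geq 0}$ such that $M$ lies in the interior, in the space of symmetric matrices, of the rational polyhedral cone $\mathcal{P}_0 := \cone\{vv^{\sf T} : v \in V\} \subseteq \mathcal{CP}_n$. One can take $V = \{e_1,\dots,e_n\} \cup \{e_i + e_j : 1\leq i<j\leq n\}$ and $M = (n-1)I + J$, where $J$ is the all-ones matrix, since $M = \sum_{i} e_i e_i^{\sf T} + \sum_{i<j}(e_i+e_j)(e_i+e_j)^{\sf T}$ is a strictly positive combination of all $|V| = \binom{n+1}{2}$ generators. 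These generators span the whole space of symmetric matrices (from $e_ie_i^{\sf T}$ and $(e_i+e_j)(e_i+e_j)^{\sf T}$ one recovers each $e_i e_j^{\sf T} + e_j e_i^{\sf T}$), so $\mathcal{P}_0$ is full-dimensional; and a matrix which is a strictly positive combination of \emph{all} generators of a full-dimensional polyhedral cone lies in its interior, because for each facet inequality $\langle C,\,\cdot\,\rangle\ge 0$ of $\mathcal{P}_0$ at least one generator satisfies it strictly (otherwise $C$ would be orthogonal to the span of the generators, hence zero).

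Granting the claim, the proof runs as follows. Since $A$ is in the interior of $\mathcal{CP}_n$ and $M \in \mathcal{CP}_n$, the set $\{\varepsilon\ge 0 : A-\varepsilon M \in \mathcal{CP}_n\}$ is a closed interval that contains a neighbourhood of $0$ in $[0,\infty)$; choose a rational $\varepsilon>0$ in it, so that $A' := A - \varepsilon M$ is a \emph{rational} matrix in $\mathcal{CP}_n$. Pick any (real) cp-factorization $A' = \sum_{i=1}^m y_i y_i^{\sf T}$ with $y_i \in \mathbb{R}^n_{\geq 0}$. Since $\varepsilon M$ lies in the interior of $\mathcal{P}_0$ (by positive scaling), there is $r>0$ such that every symmetric matrix within distance $r$ of $\varepsilon M$ lies in $\mathcal{P}_0$. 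Now choose rational vectors $\tilde y_i \in \mathbb{Q}^n_{\geq 0}$ so close to $y_i$ that
\[
E := A' - \sum_{i=1}^m \tilde y_i \tilde y_i^{\sf T} = \sum_{i=1}^m \bigl( y_i y_i^{\sf T} - \tilde y_i \tilde y_i^{\sf T} \bigr)
\]
has norm at most $r$; this is possible because $y \mapsto yy^{\sf T}$ is continuous. Then $E$ is rational and $\varepsilon M + E \in \mathcal{P}_0 = \cone\{vv^{\sf T} : v\in V\}$, so
\[
A = A' + \varepsilon M = \sum_{i=1}^m \tilde y_i \tilde y_i^{\sf T} + (\varepsilon M + E)
\]
exhibits $A$ as an element of the cone generated by the finite family of rational rank-one matrices $\{\tilde y_i \tilde y_i^{\sf T} : 1\le i\le m\} \cup \{vv^{\sf T} : v\in V\}$. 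The reduction of the first paragraph then finishes the proof.

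I expect the claim to be the main obstacle: the interior of $\mathcal{CP}_n$ carries no obvious rational-polyhedral structure, so one must single out an explicit finitely generated rational subcone that is full-dimensional and whose interior one can locate by hand. The supporting facts used --- that a nonempty rational polyhedron contains a rational point, and that $\mathcal{CP}_n$ is closed --- are standard, and the degenerate cases (e.g.\ $A'=0$, or small $n$) are immediate.
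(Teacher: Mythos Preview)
Your proof is correct and follows a genuinely different, more elementary route than the paper.  The paper's argument is dual and Diophantine: it shows, via Dirichlet's simultaneous approximation theorem, that the set $\{B:\langle B,vv^{\sf T}\rangle\ge 1\ \forall\,v\in\mathbb{Z}^n_{\ge 0}\setminus\{0\}\}$ lies in $\interior(\mathcal{COP}_n)$, then proves that intersecting this set with $\{\langle A,B\rangle\le\lambda\}$ gives a polytope, minimizes $\langle A,\cdot\rangle$ over it, and reads off from the active constraints at an optimal vertex a finite set of integer vectors whose rank-one squares generate a cone containing $A$; Carath\'eodory plus linear independence then force rational coefficients.  Your argument instead works primally and needs no Diophantine input: you fix once and for all the explicit full-dimensional rational subcone $\mathcal{P}_0=\cone\{e_ie_i^{\sf T},(e_i+e_j)(e_i+e_j)^{\sf T}\}$ with an interior rational point $M$, peel off a small rational multiple $\varepsilon M$ from $A$, take any real cp-factorization of the remainder, round its vectors, and let $\varepsilon M$ absorb the rounding error inside $\mathcal{P}_0$.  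What your approach buys is simplicity and constructiveness---no compactness-and-subsequence arguments, no Dirichlet, and the reservoir cone is written down by hand.  What the paper's approach buys is the two intermediate lemmas, which are of independent interest (they describe how $\mathcal{COP}_n$ interacts with the integer lattice) and which yield integer, not merely rational, vectors $v_i$ directly.  The final step---passing from ``$A$ lies in a rational finitely generated cone'' to ``the coefficients can be taken rational''---is the same in both proofs and is handled by Carath\'eodory plus uniqueness over a linearly independent subfamily, exactly as you indicate.
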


So to fully answer the question of Berman, D\"ur, and Shaked-Monderer,
it remains to consider the boundary of $\mathcal{CP}_n$.

\section{Proof of Theorem~\ref{thm:main}}

For the proof we will need a classical result from simultaneous
Diophantine approximation, a theorem of Dirichlet, which we state
here. One can find a proof of Dirichlet's theorem for example in the
book \cite [Theorem 5.2.1]{Groetschel1988a} of Gr\"otschel, Lov\'asz,
and Schrijver.

\begin{theorem}
\label{thm:Dirichlet}
  Let $\alpha_1, \ldots, \alpha_n$ be real numbers and let
  $\varepsilon$ be a real number with $0 < \varepsilon < 1$.  Then there
  exist integers $p_1, \ldots, p_n$ and a natural number $q$ with
  $1 \leq q \leq \varepsilon^{-n}$ such that
\[
 \left| \alpha_i - \frac{p_i}{q} \right| \leq \frac{\varepsilon}{q}
\quad\text{for all } i = 1, \ldots, n.
\]
\end{theorem}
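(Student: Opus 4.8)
The plan is to prove Theorem~\ref{thm:Dirichlet} by the sharp, measure-theoretic form of the pigeonhole principle, carried out on the torus $\T^n = \R^n/\Z^n$; a naive counting argument on a grid of boxes would only give a bound of the form $\lceil \varepsilon^{-1} \rceil^n$, so some care is needed to land exactly on $\varepsilon^{-n}$.

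First I would pass to the equivalent multiplicative statement: it suffices to produce a natural number $q$ with $1 \le q \le \varepsilon^{-n}$ together with integers $p_1, \ldots, p_n$ satisfying $|q\alpha_i - p_i| \le \varepsilon$ for all $i$, since dividing by $q$ recovers the claimed inequalities. Put $N := \lfloor \varepsilon^{-n} \rfloor$; because $0 < \varepsilon < 1$ we have $\varepsilon^{-n} > 1$, so $N \ge 1$, and by definition of the floor $N + 1 > \varepsilon^{-n}$. Let $a := (\alpha_1, \ldots, \alpha_n) + \Z^n \in \T^n$, and let $U \subseteq \T^n$ be the image of the open cube $(-\varepsilon/2, \varepsilon/2)^n \subseteq \R^n$; since $\varepsilon < 1$ this cube maps injectively into $\T^n$, so, for the Haar measure normalized by $\vol(\T^n) = 1$, one has $\vol(U) = \varepsilon^n$.

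Next I would apply a volume pigeonhole argument to the $N+1$ translates $C_k := k a + U$ for $k = 0, 1, \ldots, N$. Their volumes sum to $(N+1)\varepsilon^n > \varepsilon^{-n}\varepsilon^n = 1 = \vol(\T^n)$, so the $C_k$ are not pairwise disjoint; hence $C_{k_1} \cap C_{k_2} \neq \emptyset$ for some $0 \le k_1 < k_2 \le N$. Choosing a common point and subtracting gives $(k_2 - k_1)a \in U - U$, and since every element of $U - U$ is represented by a difference of two points of $(-\varepsilon/2, \varepsilon/2)^n$, it has a representative $w \in (-\varepsilon, \varepsilon)^n \subseteq \R^n$. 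Thus $(k_2 - k_1)\alpha_i - w_i \in \Z$ for each $i$; setting $q := k_2 - k_1$, which satisfies $1 \le q \le N \le \varepsilon^{-n}$, and $p_i := q\alpha_i - w_i \in \Z$, we obtain $|q\alpha_i - p_i| = |w_i| < \varepsilon$, as required.

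The only genuinely delicate point is the one already flagged: obtaining the bound $q \le \varepsilon^{-n}$ rather than $\lceil \varepsilon^{-1} \rceil^n$. This forces the use of the continuous form of the pigeonhole principle, based on overlapping translates whose volumes sum to more than the ambient volume, rather than partitioning into finitely many boxes, and it makes the torus the natural arena, since there the reduction modulo $\Z^n$ and the computation of $U - U$ involve no wrap-around bookkeeping. The remaining ingredients — the passage to the multiplicative form, measurability of the $C_k$, and the volume count — are routine.
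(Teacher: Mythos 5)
Your proof is correct, and the comparison here is a bit lopsided: the paper does not prove Theorem~\ref{thm:Dirichlet} at all, but simply quotes it from \cite{Groetschel1988a}, so you have supplied an argument where the authors rely on a citation. Your route is the continuous (volume) form of the pigeonhole principle on $\T^n$: taking $N=\lfloor \varepsilon^{-n}\rfloor$, the $N+1$ translates $ka+U$ of the cube of side $\varepsilon$ have total volume $(N+1)\varepsilon^n>1$, hence two of them meet, and subtracting gives $q=k_2-k_1\le N\le \varepsilon^{-n}$ with $|q\alpha_i-p_i|=|w_i|<\varepsilon$, which is even slightly stronger than the stated weak inequality; dividing by $q$ finishes the proof. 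All the steps you flag as routine do check out: $N\ge 1$ because $\varepsilon^{-n}>1$, the cube $(-\varepsilon/2,\varepsilon/2)^n$ injects into the torus because $\varepsilon<1$, the $C_k$ are open hence measurable, and every element of $U-U$ has a representative in $(-\varepsilon,\varepsilon)^n$. You are also right that this refinement is needed: the naive partition of $[0,1)^n$ into $\lceil \varepsilon^{-1}\rceil^n$ boxes only yields $q\le\lceil\varepsilon^{-1}\rceil^n$, which can exceed $\varepsilon^{-n}$. Your overlapping-translates argument is essentially a Blichfeldt-type argument, i.e.\ it is equivalent to applying Minkowski's convex body theorem to the symmetric box $\{(x_0,x_1,\ldots,x_n): |x_0|\le\varepsilon^{-n},\ |x_0\alpha_i-x_i|\le\varepsilon\}$ of volume $2^{n+1}$, which is another standard way to get the sharp bound; either version is a perfectly acceptable self-contained substitute for the reference given in the paper.
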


The next lemma collects standard, easy-to-prove facts about convex
cones. Let $E$ be a Euclidean space with inner product
$\langle \cdot, \cdot \rangle$.  Let $K \subseteq E$ be a \emph{proper
  convex cone}, which means that $K$ is closed, has a
nonempty interior, and satisfies $K \cap (-K) = \{0\}$.  Its
\emph{dual cone} is defined as
$K^* = \{y \in E : \langle x, y \rangle \geq 0 \text{ for all } x \in K\}$.

\begin{lemma}
  \label{lem:cone-facts}
Let $K \subseteq E$ be a proper convex cone. Then,
\begin{equation}
\label{eq:interior}
\interior(K) = \{x \in E : \langle x, y \rangle > 0 \text{ for
      all } y \in K^* \setminus \{0\}\},
\end{equation}
where $\interior(K)$ is the topological interior of $K$, and
\begin{equation}
\label{eq:closure}
K^* = (\cl(K))^*,
\end{equation}
where $\cl(K)$ is the topological closure of $K$.
\end{lemma}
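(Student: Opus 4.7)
The plan is to dispatch~\eqref{eq:closure} with a short density argument, and then to establish~\eqref{eq:interior} by a two-sided inclusion whose reverse direction is the only substantive step and rests on Hahn--Banach separation.

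For~\eqref{eq:closure}, the inclusion $(\cl(K))^* \subseteq K^*$ is immediate from $K \subseteq \cl(K)$, and the reverse inclusion follows by approximating any $x \in \cl(K)$ by a sequence $x_k \in K$ and passing to the limit in $\langle x_k, y \rangle \geq 0$ using continuity of the inner product. Similarly, writing $S$ for the right-hand side of~\eqref{eq:interior}, the forward inclusion $\interior(K) \subseteq S$ is an easy perturbation: for $x \in \interior(K)$ and $y \in K^* \setminus \{0\}$, the point $x - ty$ still lies in $K$ for all sufficiently small $t > 0$, so testing against $y$ yields $\langle x, y \rangle \geq t\|y\|^2 > 0$.

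For the reverse inclusion $S \subseteq \interior(K)$, I would argue the contrapositive: if $x \notin \interior(K)$, I need to produce a witness $y \in K^* \setminus \{0\}$ with $\langle x, y \rangle \leq 0$. Since $\interior(K)$ is a nonempty open convex set not containing $x$, Hahn--Banach separation supplies a nonzero $y \in E$ and a scalar $c$ with $\langle x, y \rangle \leq c \leq \langle z, y \rangle$ for every $z \in \interior(K)$; because $K$ is a convex set with nonempty interior, $K = \cl(\interior(K))$, so the right-hand inequality extends to all $z \in K$. Plugging in $z = 0$ gives $c \leq 0$ and hence $\langle x, y \rangle \leq 0$; rescaling $z$ by $t > 0$ and letting $t \to \infty$ forces $\langle z, y \rangle \geq 0$ throughout $K$, so $y \in K^*$, as required.

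The main obstacle, and the only place where genuine convex analysis enters, is this separation step: without Hahn--Banach there is no way to manufacture the linear functional $y$ that certifies $x \notin \interior(K)$. The remaining manipulations are routine continuity arguments, together with the elementary observations that $0 \in K$ and that $K$ is closed under multiplication by scalars $t \geq 0$.
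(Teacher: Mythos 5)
Your proof is correct. Note that the paper itself gives no argument for this lemma --- it is stated as a collection of ``standard, easy-to-prove facts'' --- so there is no proof to compare against; your write-up simply supplies the standard one. The two directions of \eqref{eq:closure} by monotonicity of dualization and continuity of the inner product are fine, the perturbation $x - ty \in K$ for the inclusion $\interior(K) \subseteq S$ is the usual argument, and the contrapositive via separation of $x$ from the open convex set $\interior(K)$, followed by extending the inequality from $\interior(K)$ to $K = \cl(\interior(K))$ and using $0 \in K$ together with scaling to conclude $y \in K^*$, is exactly the textbook route (in $E$ finite-dimensional one can of course replace Hahn--Banach by the elementary projection-based separation theorem, but that is a matter of taste). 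One observation worth recording: your proof of \eqref{eq:closure} never uses that $K$ is closed, and this is in fact what the paper needs, since it later applies \eqref{eq:closure} to the non-closed cone $\tilde{\mathcal{CP}}_n$, for which the identity is not the triviality it would be under the literal hypothesis that $K$ is a proper (hence closed) cone.
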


We need some more notation: With $\mathcal{S}^n$ we denote the vector space
of symmetric matrices with $n$ rows and $n$ columns which is a
Euclidean space with inner product
$\langle A, B \rangle = \Trace(AB) = \sum_{i,j=1}^n A_{ij}B_{ij}$. The
\emph{cone of copositive matrices} is the dual cone of
$\mathcal{CP}_n$:
\[
\mathcal{COP}_n = \mathcal{CP}_n^* = \{B \in \mathcal{S}^n :
  \langle A, B \rangle \geq 0 \text{ for all } A \in \mathcal{CP}\}.
\]
Its interior equals
\[
\interior(\mathcal{COP}_n) = \{B \in \mathcal{S}^n : \langle B,
  xx^{\sf T} \rangle > 0 \text{ for all } x \in \mathbb{R}^n_{\geq 0}
  \setminus \{0\}\}.
\]
We also define the following rational subcone of $\mathcal{CP}_n$:
\[
\tilde{\mathcal{CP}}_n = \cone\{vv^{\sf T} : v \in \mathbb{Z}^n_{\geq 0}\}.
\]

We prepare the proof of the paper's main result by two lemmata which
might be useful facts themselves.

\begin{lemma}
\label{lem:interior}
The set 
\[
\mathcal{R} = \{B \in \mathcal{S}^n : \langle B, vv^{\sf T} \rangle \geq 1 \text{
  for all } v \in \mathbb{Z}^n_{\geq 0} \setminus \{0\}\},
\]
is contained in the interior of the cone of copositive matrices
$\mathcal{COP}_n$.
\end{lemma}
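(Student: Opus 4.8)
The plan is to show that any $B \in \mathcal{R}$ satisfies $\langle B, xx^{\sf T}\rangle > 0$ for every $x \in \mathbb{R}^n_{\geq 0}\setminus\{0\}$, which by the displayed description of $\interior(\mathcal{COP}_n)$ gives $B \in \interior(\mathcal{COP}_n)$. Fix such a $B$ and such an $x$. First I would handle nonnegative rational (equivalently, integral) directions directly: if $v \in \mathbb{Z}^n_{\geq 0}\setminus\{0\}$ then $\langle B, vv^{\sf T}\rangle \geq 1 > 0$ by definition of $\mathcal{R}$, and by homogeneity $\langle B, xx^{\sf T}\rangle > 0$ for every $x$ lying on a rational ray in $\mathbb{R}^n_{\geq 0}$. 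The remaining work is to pass from the dense set of rational rays to an arbitrary real direction $x \in \mathbb{R}^n_{\geq 0}\setminus\{0\}$, and to do so in a way that produces a \emph{strict} inequality, not merely $\langle B, xx^{\sf T}\rangle \geq 0$.

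The key point, and where the strength of the hypothesis $\langle B, vv^{\sf T}\rangle \geq 1$ (rather than just $> 0$) is used, is a scaling argument against Dirichlet's theorem. Given $x \in \mathbb{R}^n_{\geq 0}\setminus\{0\}$, for each $\varepsilon \in (0,1)$ apply Theorem 1.1 (Dirichlet) to the coordinates of $x$ (after, say, normalizing so the largest coordinate is $1$, so that the $p_i$ are nonnegative integers) to obtain $p \in \mathbb{Z}^n_{\geq 0}\setminus\{0\}$ and $q \in \mathbb{N}$ with $1 \le q \le \varepsilon^{-n}$ and $\|x - p/q\|_\infty \le \varepsilon/q$, i.e. $\|qx - p\|_\infty \le \varepsilon$. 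Then $p/q$ is a nonnegative rational vector close to $x$, and $\langle B, pp^{\sf T}\rangle \ge 1$ forces $\langle B, (qx)(qx)^{\sf T}\rangle$ to be bounded below: writing $qx = p + r$ with $\|r\|_\infty \le \varepsilon$, expand
\[
\langle B, (qx)(qx)^{\sf T}\rangle = \langle B, pp^{\sf T}\rangle + 2\langle B, p\,r^{\sf T}\rangle + \langle B, rr^{\sf T}\rangle \ge 1 - C\|B\|\,\|p\|\,\varepsilon - \|B\|\,\varepsilon^2
\]
for a dimensional constant $C$, where $\|p\|$ is controlled by $\|qx\| + \varepsilon \le \varepsilon^{-n}\|x\|_\infty\sqrt n + 1$ or similar. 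Dividing by $q^2 \le \varepsilon^{-2n}$ gives
\[
\langle B, xx^{\sf T}\rangle \ge \varepsilon^{2n}\bigl(1 - C'\varepsilon\bigr)
\]
for a constant $C'$ depending only on $B$, $x$, and $n$. For $\varepsilon$ small enough the right-hand side is strictly positive, hence $\langle B, xx^{\sf T}\rangle > 0$, as desired.

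The main obstacle is bookkeeping in the error estimate: one must bound the cross term $\langle B, pr^{\sf T}\rangle$ uniformly in $\varepsilon$, and since $\|p\|$ can grow like $\varepsilon^{-n}$ while $\|r\|_\infty \le \varepsilon$, the product $\|p\|\,\varepsilon$ stays bounded (indeed $\to 0$ is not needed — bounded suffices after the final division extracts the dominant $\varepsilon^{2n}$ from $q^{-2}$, but one must be careful that the $1$ from $\langle B, pp^{\sf T}\rangle \ge 1$ genuinely dominates). An equivalent and perhaps cleaner route avoiding explicit constants: suppose for contradiction $\langle B, xx^{\sf T}\rangle = 0$ (it is $\ge 0$ by the rational-density and closedness argument together with \eqref{eq:closure}); then take a sequence $\varepsilon_k \to 0$, get $p_k/q_k \to x$ with $q_k x - p_k \to 0$, note $\langle B, (q_k x)(q_k x)^{\sf T}\rangle = q_k^2\langle B,xx^{\sf T}\rangle = 0$, yet $\langle B, p_k p_k^{\sf T}\rangle \ge 1$; subtracting and using $\|q_k x - p_k\|_\infty \to 0$ to kill the cross and quadratic terms — after checking $\|p_k\| \le \|q_k x\| + o(1)$ and that $\langle B, (q_k x)(q_k x)^{\sf T}\rangle$ together with $q_k^2\langle B,xx^{\sf T}\rangle$ pin down the growth — yields $1 \le \langle B, p_k p_k^{\sf T}\rangle \to 0$, a contradiction. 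I would write up whichever of these two phrasings keeps the constants most transparent.
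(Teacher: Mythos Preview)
Your approach has a genuine gap in the control of the cross term $p^{\sf T} B r$. You assert that $\|p\|\,\varepsilon$ stays bounded, but Dirichlet's theorem only guarantees $q \leq \varepsilon^{-n}$, so $\|p\| \leq \|qx\| + \|r\|$ can be of order $\varepsilon^{-n}$, and hence $\|p\|\,\varepsilon$ can be as large as a constant times $\varepsilon^{1-n}$, which diverges for $n \geq 2$. Thus in your displayed estimate the term $C\|B\|\,\|p\|\,\varepsilon$ may overwhelm the $1$ coming from $\langle B, pp^{\sf T}\rangle \geq 1$, and the conclusion $\langle B, xx^{\sf T}\rangle \geq \varepsilon^{2n}(1 - C'\varepsilon)$ does not follow. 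The same issue undermines your contradiction route: from $(q_k x)^{\sf T} B (q_k x) = 0$ and $\|r_k\|_\infty \to 0$ you cannot infer $p_k^{\sf T} B p_k \to 0$, since the cross term $2p_k^{\sf T} B r_k$ is only bounded by a constant times $q_k \varepsilon_k \leq \varepsilon_k^{1-n}$, which need not tend to zero.

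The paper supplies the missing idea. Arguing by contradiction with $x^{\sf T} B x = 0$, one first reduces (passing to a principal submatrix if some $x_i = 0$) to the case where every coordinate of $x$ is strictly positive, so that $x$ lies in the interior of $\mathbb{R}^n_{\geq 0}$. Since $B \in \mathcal{COP}_n$ (this much you have, by density), the quadratic form $y \mapsto y^{\sf T} B y$ is nonnegative on the orthant and attains its minimum $0$ at the interior point $x$; a first-variation argument (perturbing $x \pm \varepsilon y$ within the orthant) then forces $x^{\sf T} B = 0$. This identity kills the cross term \emph{exactly}: writing $p = qx - y$ with $\|y\|_\infty \leq \varepsilon$, one obtains $p^{\sf T} B p = (qx - y)^{\sf T} B (qx - y) = y^{\sf T} B y \leq C\varepsilon^2$, contradicting $p^{\sf T} B p \geq 1$ for small $\varepsilon$. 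Without first establishing $Bx = 0$, size estimates alone are insufficient.
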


\begin{proof}
  Since the set of nonnegative rational vectors
  $\mathbb{Q}^n_{\geq 0}$ lies dense in the nonnegative orthant
  $\mathbb{R}_{\geq 0}^n$, we have the inclusion
  $\mathcal{R} \subseteq \mathcal{COP}_n$. Suppose for contradiction
  that the set on the left is not contained in
  $\interior(\mathcal{COP}_n)$: There is a matrix $B$ with
  $\langle B, vv^{\sf T} \rangle \geq 1$ for all
  $v \in \mathbb{Z}^n_{\geq 0} \setminus \{0\}$ and there is a nonzero
  vector $x \in \mathbb{R}^n_{\geq 0}$ with
  $\langle B, xx^{\sf T} \rangle = 0$.

  By induction on $n$ (and reordering if necessary) we may assume that
  all entries of $x$ are strictly positive, $x_i > 0$ for all
  $i = 1, \ldots, n$, since otherwise, we can reduce the situation to
  the case of smaller dimension by considering a suitable submatrix of
  $B$.

Hence, the vector $x$ lies in the interior of the nonnegative
orthant. Therefore, and because $B \in \mathcal{COP}_n$, we have for every
vector $y \in \R^n$ and $\varepsilon > 0$ sufficiently small the
inequality
\[
0 \leq \frac{1}{\varepsilon}(x + \varepsilon y)^{\sf T} B (x +
\varepsilon y) = 2x^{\mathsf T} B y + \varepsilon y^{\sf T} B y
\]
and similarly
\[
0 \leq \frac{1}{\varepsilon}(x - \varepsilon y)^{\sf T} B (x -
\varepsilon y) = - 2x^{\mathsf T} B y + \varepsilon y^{\sf T} B y
\]
From this, equality $x^{\mathsf T} B = 0$ follows. From this, we also
see that $B$ is positive semidefinite. This implies that
\[
(\alpha x + y)^{\sf T} B (\alpha x + y) = y^{\sf T} B y \quad \text{for } \alpha\in \R \text{ and } y\in \R^n.
\]

We apply Dirichlet's approximation theorem,
Theorem~\ref{thm:Dirichlet} to the vector $x$ and to
$\varepsilon \in (0,1)$. We obtain a vector $p = (p_1, \ldots, p_n)$
and a natural number $q$. Since $x_i > 0$ we may without loss of
generality assume that $p_i \geq 0$. Thus, by the assumption $B \in
\mathcal{R}$, we have $\langle B, pp^{\sf T} \rangle \geq 1$.

Define 
\[
y = q x - p \quad \text{where} \quad \|y\|_{\infty} \leq \varepsilon.
\]
Since $B$ is positive semidefinite, there is a constant $C$ such that
$y^{\sf T} B y \leq C \|y\|_{\infty}^2$ for all $y \in \mathbb{R}^n$.
Putting everything together we get
\[
1 \leq  \langle B, pp^{\sf T} \rangle = (qx - y)^{\sf T} B  (qx - y) =  y^{\sf
  T} B y \leq C \|y\|_{\infty}^2 \leq C \varepsilon^{2},
\]
which yields a contradiction for small enough values of $\varepsilon$. 
\end{proof}

\begin{lemma}
  Let $A$ be a completely positive matrix which lies in the interior
  of $\mathcal{CP}_n$ and let $\lambda$ be a sufficiently large positive real
  number. Then 
  the set
\[
\mathcal{P}(A,\lambda) = \{B \in \mathcal{S}^n : \langle A, B \rangle \leq
\lambda, \; \langle B, vv^{\sf T} \rangle \geq 1 \text{ for all } v \in
\mathbb{Z}^n_{\geq 0} \setminus \{0\}\}
\]
is a full-dimensional polytope.
\end{lemma}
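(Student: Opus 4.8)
The plan is to exhibit $\mathcal{P}(A,\lambda)$ as a bounded polyhedron -- hence a polytope -- by showing that all but finitely many of the inequalities $\langle B, vv^{\sf T}\rangle \geq 1$ are redundant, and then to point to an explicit interior point for full-dimensionality. Throughout, $\|\cdot\|$ denotes the Euclidean norm on $\R^n$ and the Frobenius norm on $\mathcal{S}^n$, and I will use $|x^{\sf T}Cx| = |\langle C, xx^{\sf T}\rangle| \leq \|C\|$ for $x \in \R^n$ with $\|x\| = 1$. Since $\mathcal{CP}_n$ is a proper convex cone with dual $\mathcal{COP}_n$, Lemma~\ref{lem:cone-facts} applies; in particular \eqref{eq:interior} together with $A \in \interior(\mathcal{CP}_n)$ gives $\langle A, D\rangle > 0$ for all $D \in \mathcal{COP}_n\setminus\{0\}$, and consequently $\langle A, D\rangle \geq \delta\|D\|$ on $\mathcal{COP}_n$, where $\delta > 0$ is the minimum of the continuous positive function $D \mapsto \langle A, D\rangle$ on the compact set $\{D\in\mathcal{COP}_n : \|D\|=1\}$. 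By Lemma~\ref{lem:interior}, $\mathcal{P}(A,\lambda) \subseteq \mathcal{R} \subseteq \interior(\mathcal{COP}_n) \subseteq \mathcal{COP}_n$, so every $B \in \mathcal{P}(A,\lambda)$ has $\|B\| \leq \lambda/\delta$; being also closed, $\mathcal{P}(A,\lambda)$ is compact, and it is nonempty as soon as $\lambda \geq 2\Tr A$ (it then contains $2I$).

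For the polytope property, let $P_N$ ($N \in \NN$) be the polyhedron cut out by $\langle A, B\rangle \leq \lambda$ and the finitely many inequalities $\langle B, vv^{\sf T}\rangle \geq 1$ with $v \in \Z^n_{\geq 0}$, $0 < \|v\| \leq N$, so that $P_1 \supseteq P_2 \supseteq \cdots$ and $\mathcal{P}(A,\lambda) = \bigcap_N P_N$. The recession cone of $P_N$ is $\{D \in \mathcal{S}^n : \langle A, D\rangle \leq 0 \text{ and } \langle D, vv^{\sf T}\rangle \geq 0 \text{ for } 0 < \|v\| \leq N\}$; these cones are nested, and because $\mathbb{Q}^n_{\geq 0}$ is dense in $\R^n_{\geq 0}$ their intersection over all $N$ is $\{D : \langle A, D\rangle \leq 0\}\cap \mathcal{COP}_n$, which equals $\{0\}$ since $\langle A,\cdot\rangle$ is positive on $\mathcal{COP}_n\setminus\{0\}$. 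Intersecting with the unit sphere gives nested compact sets with empty intersection, so by compactness the recession cone of $P_{N_1}$ is already $\{0\}$ for some $N_1$; hence $P_{N_1}$, and therefore every $P_N$ with $N \geq N_1$, is a bounded polyhedron, i.e.\ a polytope.

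It remains to see that $\mathcal{P}(A,\lambda) = P_N$ for $N$ large. Suppose not: for each $N \geq N_1$ choose $B_N \in P_N \setminus \mathcal{P}(A,\lambda)$, so $v_N^{\sf T} B_N v_N < 1$ for some $v_N \in \Z^n_{\geq 0}\setminus\{0\}$, which forces $\|v_N\| > N$. The $B_N$ all lie in the compact polytope $P_{N_1}$, so a subsequence $B_{N_k}$ converges to some $B_*$. Every fixed inequality $\langle B, ww^{\sf T}\rangle \geq 1$ is among those defining $P_{N_k}$ for $k$ large, so $B_*$ satisfies all of them, and $\langle A, B_*\rangle \leq \lambda$; thus $B_* \in \mathcal{P}(A,\lambda)$, and by Lemma~\ref{lem:interior} $B_* \in \interior(\mathcal{COP}_n)$, so $\mu := \min\{x^{\sf T}B_* x : x \in \R^n_{\geq 0},\ \|x\|=1\} > 0$. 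Since $|x^{\sf T}(B_{N_k}-B_*)x| \leq \|B_{N_k}-B_*\| \to 0$ uniformly over unit $x$, we get $x^{\sf T}B_{N_k}x \geq \mu/2$ for all unit $x \in \R^n_{\geq 0}$ once $k$ is large; writing $v_{N_k} = \|v_{N_k}\|\cdot(v_{N_k}/\|v_{N_k}\|)$ this yields $1 > v_{N_k}^{\sf T}B_{N_k}v_{N_k} \geq \|v_{N_k}\|^2 \mu/2 > N_k^2 \mu/2 \to \infty$, a contradiction. Hence $\mathcal{P}(A,\lambda) = P_N$ for some $N$, a polytope.

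Finally, $\mathcal{P}(A,\lambda)$ is full-dimensional for $\lambda$ large: since $\langle 2I, vv^{\sf T}\rangle = 2\|v\|^2 \geq 2$ for every $v \in \Z^n_{\geq 0}\setminus\{0\}$, any $B$ with $\|B - 2I\| \leq 1$ satisfies $\langle B, vv^{\sf T}\rangle \geq \|v\|^2 \geq 1$, and as $\langle A, 2I\rangle = 2\Tr A < \lambda$ a small ball around $2I$ also satisfies $\langle A, B\rangle \leq \lambda$; this ball lies in $\mathcal{P}(A,\lambda)$. The step I expect to be the main obstacle is the redundancy argument: one must prevent the inequalities indexed by longer and longer integer vectors from continuing to trim the set, and this relies on two facts working together -- that the recession cones of the truncations $P_N$ collapse to $\{0\}$, so the $P_N$ are eventually bounded, and that, via Lemma~\ref{lem:interior}, any limit point of almost-violating matrices lands in $\interior(\mathcal{COP}_n)$ and hence enforces a uniform positive lower bound for $x^{\sf T}Bx$ on the nonnegative unit sphere, which a long vector cannot circumvent.
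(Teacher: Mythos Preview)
Your proof is correct and follows essentially the same approach as the paper: both arguments obtain boundedness from the fact that $A \in \interior(\mathcal{CP}_n)$ makes $\langle A,\cdot\rangle$ strictly positive on $\mathcal{COP}_n\setminus\{0\}$, and both reduce polyhedrality to a compactness argument in which Lemma~\ref{lem:interior} provides a uniform positive lower bound for $x^{\sf T}Bx$ on the nonnegative unit sphere, contradicting the relevance of arbitrarily long integer vectors. The only cosmetic differences are that you package the argument via explicit truncations $P_N$ and take $2I$ as the interior witness, whereas the paper argues directly with active constraints and uses a multiple of $A$.
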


\begin{proof}
  For sufficiently large $\lambda$ a sufficiently small ball around a
  suitable multiple of $A$ is contained in $P(A,\lambda)$, which shows
  that $P(A,\lambda)$ has full dimension.

\smallskip

  By the theorem of Minkowski and Weyl, see for example
  \cite[Corollary 7.1c]{Schrijver1986a}, polytopes are exactly bounded
  polyhedra. So it suffices to show that the
  set $\mathcal{P}(A,\lambda)$ is a bounded polyhedron.

\smallskip

First we show that $\mathcal{P}(A, \lambda)$ is bounded: For suppose
not. Then there is $B_0 \in \mathcal{P}(A, \lambda)$ and
$B_1 \in \mathcal{S}^n$, with $B_1 \neq 0$, so that the ray
$B_0 + \alpha B_1$, with $\alpha \geq 0$, lies completely in
$\mathcal{P}(A,\lambda)$. In particular
$\langle B_1, vv^{\sf T} \rangle \geq 0$ for all
$v \in \mathbb{Z}^n_{\geq 0}$. Hence, $B_1$ lies in the dual cone of
$\tilde{\mathcal{CP}}_n$.
On the other hand $\langle A, B_1 \rangle \leq 0$. Hence, by
Lemma~\ref{lem:cone-facts} \eqref{eq:interior},
$B_1 \not\in \mathcal{COP}_n \setminus \{0\}$, but by
Lemma~\ref{lem:cone-facts} \eqref{eq:closure},
\[
\tilde{\mathcal{CP}}_n^* =  (\cl(\tilde{\mathcal{CP}}_n))^* = \mathcal{CP}_n^* =
\mathcal{COP}_n,
\]
so $B_1 = 0$, yielding a contradiction.

\smallskip

Now we show that $\mathcal{P}(A, \lambda)$ is a polyhedron: For
suppose not. Then there is a sequence
$v_i \in \mathbb{Z}^n_{\geq 0} \setminus \{0\}$ of infinitely many
pairwise different nonzero lattice vectors so that there are
$B_i \in \mathcal{P}(A,\lambda)$ with
$\langle B_i, v_iv_i^{\sf T} \rangle = 1$. Since
$\mathcal{P}(A, \lambda)$ is compact, there exists a subsequence
$B_{i_j}$ which converges to $B^* \in \mathcal{P}(A, \lambda)$. Define
the sequence $u_{i_j} = v_{i_j}/\|v_{i_j}\|$ which lies in the compact
set $\mathbb{R}^n_{\geq 0} \cap S^{n-1}$ where $S^{n-1}$ denotes the
unit sphere. Hence there is a subsequence converging to $u^* \in
S^{n-1}$, in particular $u^* \neq
0$. Denote the indices of this subsequence with $k$, then
\[
1 = \langle B_k, v_k v_k^{\sf T} \rangle = \|v_k\|^2 \langle B_k, u_k u_k^{\sf T} \rangle.
\]
When $k$ tends to infinity, the squared norms $\|v_k\|^2$ tend to
infinity as well, since we use infinitely many pairwise different
lattice vectors and there exist only finitely many lattice vectors up
to some given norm. So $\langle B_k, u_k u_k^{\sf T} \rangle$ tends to
$\langle B^*, u^*(u^*)^{\sf T} \rangle = 0$, and by
Lemma~\ref{lem:interior} we obtain a contradiction.
\end{proof}

Now we prove the main result and finish the paper.

\begin {proof}[Proof of Theorem~\ref{thm:main}]
  Let $A$ be matrix having rational entries only and lying in the
  interior of the cone of completely positive matrices. Then
  $\mathcal{P}(A,\lambda)$ is a polytope according to the previous
  lemma.  We minimize the linear functional
  $B \mapsto \langle A, B\rangle$ over $\mathcal{P}(A,\lambda)$. The
  minimum is attained at one of the polytopes' vertices,
  $B^* \in \mathcal{P}(A,\lambda)$. Then we choose those lattice
  vectors $v_i \in \mathbb{Z}_{\geq 0}^n$, with $i = 1, \ldots, m$ for
  which equality $\langle B^*, v_iv_i^{\sf T} \rangle = 1$
  holds. Because of the minimality of $\langle A, B^* \rangle$ it
  follows 
\begin{equation}
\label{eq:cone}
A \in \cone\{v_iv_i^{\sf T} : i = 1, \ldots, m\}.
\end{equation}
Otherwise, see for example \cite[Theorem 7.1]{Schrijver1986a}, we
find a separating linear hyperplane orthogonal to $C$ separating
$A$ and $\cone\{v_iv_i^{\sf T} : i = 1, \ldots, m\}$:
\[
\langle C, A \rangle < 0 \quad \text{and} \quad \langle C, v_iv_i^{\sf
  T} \rangle \geq 0 \; \text{ for all } i = 1, \ldots, m.
\]
Then for sufficiently small $\mu > 0$ we would have
\[
B^* + \mu C \in \mathcal{P}(A,\lambda) \quad \text{but} \quad
\langle B^* + \mu C, A \rangle < \langle B^*, A \rangle,
\]
which contradicts the minimality of $\langle A, B^* \rangle$.

We apply Carath\'eodory's theorem (see for example \cite[Corollary
7.1i]{Schrijver1986a}) to \eqref{eq:cone} and choose a subset
$I \subseteq \{1, \ldots, m\}$ so that $v_iv_i^{\sf T}$ are linearly
independent and so that $A$ lies in
$\cone\{v_iv_i^{\sf T} : i \in I\}$. Since $A$ is a rational matrix
and since the $v_iv_i^{\sf T}$'s are linearly independent rational
matrices, there is a unique choice of rational numbers
$\alpha_i \in \mathbb{Q}_{\geq 0}$, with $i \in I$, so that
$A = \sum_{i \in I} \alpha_i v_i v_i^{\sf T}$ holds, which gives a
desired rational cp-factorization.
\end{proof}

\section*{Acknowledgements}

We thank Naomi Shaked-Monderer for comments on the manuscript.

\end{document}